\newtheorem{thm}{Theorem}[section]
\newtheorem{cor}[thm]{Corollary}
\newtheorem{lem}[thm]{Lemma}
\theoremstyle{definition}
\theoremstyle{remark}
\numberwithin{equation}{section}
\def\.{{\cdot}}
\def\<{\langle} \def\>{\rangle}
\def\lead{\leaders\hbox to 1.5ex{\hss${.}$\hss}\hfill}
\def\arr{\hbox to 40pt{\rightarrowfill}}
\def\larr{\hbox to 40pt{\leftarrowfill}}
\long\def\alert#1{\parindent2em\smallskip\hbox to\hsize%
{\hskip\parindent\vrule%
\vbox{\advance\hsize-2\parindent\hrule\smallskip\parindent.4\parindent%
\narrower\noindent#1\smallskip\hrule}\vrule\hfill}\smallskip\parindent0pt}
\begin{document}

\title[Estimations of the low dimensional homology of ...]
 {Estimations  of the low dimensional homology of  Lie algebras with large abelian ideals}

\author[P. Niroomand]{Peyman Niroomand}
\address{Department of Pure Mathematics\\
Damghan University, Damghan, Iran}
\email{p$\_$niroomand@yahoo.com}

\author[F.G.  Russo]{Francesco G. Russo}
\address{DIEETCAM\\
Universit\'a degli Studi di Palermo\\
Viale Delle Scienze, Edificio 8, 90128, Palermo, Italy}
\email{francescog.russo@yahoo.com}

%\thanks{\textit{Mathematics Subject Classification 2010.} ......}

\keywords{Heisenberg algebras, Schur multiplier, nilpotent Lie algebras.}
 \subjclass[2010]{17B30; 17B60; 17B99}

\date{\today}

\begin{abstract}
A  Lie algebra $L$ of dimension $n \ge1 $ may be classified, looking for restrictions of the size on its second integral homology Lie algebra $H_2(L,\mathbb{Z})$,  denoted by $M(L)$ and often called Schur multiplier of $L$. In case $L$ is nilpotent,  we proved that $\mathrm{dim} \ M(L) \leq \frac{1}{2}(n+m-2)(n-m-1)+1$,  where $\mathrm{dim} \ L^2=m \ge 1$, and worked on this bound under various perspectives. In the present paper, we estimate the previous bound for $\mathrm{dim} \ M(L) $ with respect to other inequalities of the same nature. Finally, we provide  new upper bounds for the Schur multipliers of pairs and triples of nilpotent Lie algebras, by means of  certain exact sequences due to Ganea and Stallings in their original form.
\end{abstract}

%%% ----------------------------------------------------------------------
\maketitle
%%% ----------------------------------------------------------------------
\section{Previous contributions and statement of the results}
The classification of finite dimensional nilpotent Lie algebras has interested the works of several authors both in topology and in algebra, as we can note from \cite{anc1, anc3}.
The second integral homology Lie algebra $H_2(L,\mathbb{Z})$ of a nilpotent Lie algebra $L$ of dimension $\mathrm{dim} \ L =n$ is again a finite dimensional Lie algebra and its dimension may be connected with that of $L$ under many points of view. It is customary to call $H_2(L,\mathbb{Z})$ the \textit{Schur multiplier} of $L$ and denote it with $M(L)$, in analogy with the case of groups due to Schur (see \cite{karp,rot}).  The study of the numerical conditions among  $\mathrm{dim} \ L$ and  $\mathrm{dim} \ M(L)$ is the subject of many investigations.

A classic contribution of Batten and others \cite{es1} shows that \begin{equation}\label{batten}\mathrm{dim} \ M(L) \leq \frac{1}{2} n (n-1).\end{equation}
Denoting  $\mathrm{dim} \ L^2=m \ge 1$,
Yankosky \cite{yank} sharpened \eqref{batten} by
\begin{equation}\label{yank} \mathrm{dim} \ M (L) \le \frac{1}{2} (m - m^2 + 2mn - 2n)\end{equation}
in which the role of $m$ is significant. We  contributed in \cite{n1, n2, n3, n4, n6} under various aspects and showed that  \begin{equation}\label{nice}\mathrm{dim} \ M(L) \leq \frac{1}{2}(n+m-2)(n-m-1)+1,\end{equation} is better than \eqref{batten} and \eqref{yank}. The crucial step was to prove that \eqref{nice} is better than \eqref{yank} unless  small values of $m$  and $n$ occur (see \cite[Corollary 3.4]{n2}) and this happens  most of the times.

Another  inequality of the same nature of \eqref{yank} and \eqref{nice}  is given by
\begin{equation}\label{salemkar}\mathrm{dim} \ M(L) \leq \mathrm{dim} \ M(L/L^2) + \mathrm{dim} \ L^2 (\mathrm{dim} \ L/Z(L) -1).\end{equation}
and can be found in \cite[Corollary 3.3]{sal1}, where it is  shown that
it is  better than  \eqref{batten}.

We will concentrate on \eqref{batten}, \eqref{nice} and \eqref{salemkar} in the present paper, but inform the reader that there are other inequalities  in \cite{sal1, bosko1, bosko2, sal2, sal3, n1, n2, n3, n4, n6} which may be treated in a similar way, once  an appropriate study is done.

Most of the above bounds is in fact inspired by analogies with the case of groups (see  \cite{es1,es2,n5}), even if careful distinctions should be made between the two contexts. Here we  do qualitative considerations among \eqref{yank}, \eqref{nice} and \eqref{salemkar}, because it is not easy to compare   all of them and  a specific study is necessary.

On the other hand, the importance of \eqref{yank}, \eqref{nice} and \eqref{salemkar} is due to the invariants which appear. In order to understand this point, we should recall that the idea of classifying nilpotent Lie algebras of finite dimension by restrictions on  their Schur multipliers goes back to  \cite{es1, es2} and continued in  \cite{sal1, bosko1, bosko2, sal2, sal3} under different perspectives. These authors proved  inequalities on $\mathrm{dim} \ M(L)$, involving invariants related  with the presentation of $L$. We know that $L \simeq F/R$  by the short exact sequence $0 \rightarrow R   \rightarrowtail F \twoheadrightarrow L \rightarrow 0$, where $F$ is a \textit{free Lie algebra} $F$ on $n$ generators,  $R$ an ideal of $F$  and Witt's Formula, which can be found in \cite{sal1, bosko2,  anc3, rot}, shows
\begin{equation}\label{moebius}
\mathrm{dim} \ F^d/F^{d+1} = \frac{1}{d} \sum_{r | d}\mu(r) \ n^{\frac{d}{r}} \equiv l_n(d),
\end{equation}
where
\[\mu(r) =\left\{\begin{array}{lcl}  1, & \mathrm{if} \ r=1,\\
0, & \mathrm{if} \ r \ \mathrm{is} \  \mathrm{divisible} \ \mathrm{by} \ \mathrm{a}  \ \mathrm{square},\\
(-1)^s, & \mathrm{if} \ r=p_1 \ldots p_s \  \mathrm{for} \ \mathrm{distinct} \ \mathrm{primes}  \ p_1, \ldots, p_s\end{array}\right.\]
is the celebrated \textit{M\"obius function}.   Now \cite[Theorem 2.5]{sal1} and similar results of \cite{sal1, bosko1, bosko2, sal2, sal3} provide inequalities of the same nature of  \eqref{yank} and \eqref{nice} but based on \eqref{moebius} and the main problem is to give an explicit expression for $l_n(d)$.
For instance,  if $c$ denotes the class of nilpotence of $L$, then \cite[Theorem 4.1]{bosko2} shows
\begin{equation}\label{bad}
\mathrm{dim} \ M(L) \le \sum_{j=1}^c l_n(j+1) = \sum_{j=1}^c  \ \left( \frac{1}{j+1}\sum_{i | j+1} \mu(i) \ n^{\frac{j+1}{i}} \right)
\end{equation}
and \cite[Examples 4.3, 4.4]{bosko2} provide explicit values for $\mu(m)$ in order to evaluate numerically \eqref{bad} and then to compare with \eqref{batten}. It is in fact hard to describe the behaviour of the M\"obius function from a general point of view and so \eqref{moebius} is not very helpful in the practice, when we do not evaluate the coefficients $\mu(m)$. We note briefly that  \cite[Theorem 3.2.5]{karp} is the corresponding version of  \eqref{bad} for groups and  the same problems happen also here.

Now we may understand the importance of being as much concrete as possible in the study of the upper bounds for $\mathrm{dim} \ M(L)$. We should also recall that  $A(n)$ denotes the abelian Lie algebra of dimension $n$ and the main results of \cite{sal1, es1, bosko1, bosko2, sal2, sal3,  es2, n1, n2, n3, n6} illustrate that many inequalities on $\mathrm{dim} \ M(L)$  become equalities if and only if $L$ splits in the sums of $A(n)$ and of a \textit{Heisenberg algebra} $H(m)$ (here $m \ge 1$ is a given integer).
To convenience of the reader, we recall that a finite dimensional
Lie algebra $L$ is called $Heisenberg$ provided that $L^2 = Z(L)$
and $\mathrm{dim} \ L^2 = 1$. Such algebras are odd dimensional with
basis $v_1, \ldots , v_{2m}, v$ and the only non-zero multiplication
between basis elements is $[v_{2i-1}, v_{2i}] = - [v_{2i},
v_{2i-1}]= v$ for $i = 1, \ldots ,m$.  Unfortunately, theorems of splitting of the aforementioned papers hold  only for small values of $m$ and $n$ (see for instance \cite[Theorem 3.1]{n1} or \cite[Theorems 2.2, 3.1, 3.5, 3.6, 4.2]{n2})  and we are very far from controlling the general cases. In fact,  Chao \cite{chao}  and Seeley \cite{see} proved that there exist uncountably many non--isomorphic nilpotent Lie algebras of finite dimension, beginning already from dimension 10 and this illustrates the complexity of the problem. At this point, we may state the first main result.

\begin{thm}\label{t:1} Let $L$ be a nilpotent Lie algebra of $\mathrm{dim}  \ L=n$, $\mathrm{dim} \ L^2=m$ and $\mathrm{dim} \ Z(L)=d$. If $L$ is nonabelian, then \eqref{nice} is better than \eqref{salemkar} for all $n\ge3$, $d \ge 1$  and $m \le \Big\lfloor \frac{n-2}{d+1}\Big\rfloor$.
\end{thm}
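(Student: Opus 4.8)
The plan is to render both bounds completely explicit and then compare them as polynomials in $n,m,d$. The key structural input is that $L/L^2$ is abelian of dimension $n-m$, whence its Schur multiplier is the exterior square and $\dim M(L/L^2)=\binom{n-m}{2}=\frac12(n-m)(n-m-1)$; equivalently, the Batten bound \eqref{batten} is attained with equality on abelian algebras. Substituting this together with $\dim L^2=m$ and $\dim Z(L)=d$ into \eqref{salemkar}, the right-hand side of \eqref{salemkar} becomes $\frac12(n-m)(n-m-1)+m(n-d-1)$.

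With both right-hand sides now explicit, a direct expansion shows that the right-hand side of \eqref{salemkar} minus the right-hand side of \eqref{nice} equals
\[
m^2-(d+1)\,m+(n-2).
\]
Thus proving the theorem reduces to verifying $m^2-(d+1)m+(n-2)\ge 0$, that is, $n-2\ge m\big((d+1)-m\big)$.

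The hypothesis is tailored exactly to this inequality. Since $m\le\big\lfloor\frac{n-2}{d+1}\big\rfloor$ and $m$ is an integer, we have $m(d+1)\le n-2$, so
\[
m\big((d+1)-m\big)=m(d+1)-m^2\le (n-2)-m^2<n-2,
\]
the final strict inequality using $m\ge1$, which holds because $L$ is nonabelian. This yields $m^2-(d+1)m+(n-2)>0$, so \eqref{nice} is in fact strictly better than \eqref{salemkar} under the stated hypotheses.

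I expect the only nonroutine step to be the identification $\dim M(L/L^2)=\binom{n-m}{2}$; once the abelian quotient is handled, the remainder is a one-line quadratic estimate in which the bound $m\le\lfloor(n-2)/(d+1)\rfloor$ does all the work, while $d\ge1$ and $n\ge3$ serve only to keep the floor and the comparison well posed.
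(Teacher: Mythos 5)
Your proposal is correct and follows essentially the same route as the paper: both first use the abelian-quotient identity $\dim M(L/L^2)=\frac{1}{2}(n-m)(n-m-1)$ (the paper's Lemma \ref{l:4}) to make \eqref{salemkar} explicit, then expand both right-hand sides and observe that the hypothesis $m\le\lfloor (n-2)/(d+1)\rfloor$ is exactly the condition $(d+1)m\le n-2$ needed for the comparison. The only cosmetic difference is that you collect everything into the single difference polynomial $m^2-(d+1)m+(n-2)$ (and get a strict inequality using $m\ge 1$), whereas the paper compares the quadratic terms and the remaining terms separately.
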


%From \cite{anc3,n1,n2,n3,n4}, we can see that  $A(n)$ has \[M(A(n)) \simeq A(n) \wedge A(n) =\frac{ A(n) \otimes A(n)}{A(n) \square A(n)},\] where \[\mathrm{diag} (A(n) \otimes A(n))=A(n) \square A(n)=\langle a \otimes a \ | \ a \in A(n)\rangle\] is the \textit{diagonal subalgebra} of the \textit{abelian tensor square} $A(n) \otimes A(n)$. In other words, if we look for bounds of $M(A(n))$, this is the same to look for bounds of $A(n) \wedge A(n)$, or equivalently, to look for bounds of a homomorphic image of $A(n) \otimes A(n)$. Our second main result deals with the size of the dimension of homomorphic images of Lie algebras.

%\begin{thm}\label{t:2}Let $L$ be a finite dimensional nilpotent Lie algebra of $\mathrm{dim} \ L^2 =m$ and $\mathrm{dim} \ L/Z(L)=k \ge 3$. Then  there exists a homomorphic image $H$ of  $L/L^2+Z(L) \otimes L/L^2+Z(L) \otimes L/L^2+Z(L)$ such that
% $ k-2 \le \mathrm{dim} \ H \le  km.$
%\end{thm}

We may be more specific in the nilpotent case and use  certain exact sequences due to Ganea and Stallings \cite[Theorem 2.5.6]{karp} which have been adapted recently in \cite{sal4,  ris1, ris2, sal6, n7} to Lie algebras. Some notions of homological algebra should be recalled, in order to formulate the next result. The \textit{Schur multiplier of the pair} $(L,N)$, where $L$ is a Lie algebra with ideal $N$, is the abelian Lie algebra $M(L,N)$ which appears in the following  natural exact sequence of Mayer--Vietoris type
\begin{equation}\label{pairs}H_3(L) \longrightarrow H_3(L/N) \longrightarrow M(L,N)  \longrightarrow M(L) \longrightarrow M(L/N) \longrightarrow \end{equation}
\[ \longrightarrow \frac{L}{[L,N]} \longrightarrow \frac{L}{L^2} \ \longrightarrow \ \frac{L}{L^2 + N} \ \longrightarrow 0 \]
where the third homology (with integral coefficients) $H_3(L)$ of $L$ and $H_3(L/N)$ of $L/N$ are involved. We also recall that $\Phi(L)$ denotes the \textit{Frattini} subalgebra of $L$, that is, the intersection of all maximal subalgebras of $L$ (see \cite{marshall,n7}). It is easy to see that $\Phi(L)$ is an ideal of $L$, when $L$ is finite dimensional and nilpotent.

\begin{thm}\label{t:4} Let $L$ be a nilpotent Lie algebra of $\mathrm{dim} \ L=n$ and $N$ an ideal of $L$ of $\mathrm{dim} \ L/N =u$. Then \[ \mathrm{dim} \ M(L,N) \ + \ \mathrm{dim} \ [L,N] \le \frac{1}{2}n(2u+n-1).\] Furthermore, if $\mathrm{dim} \ L/(N+\Phi(L))=s$ and $\mathrm{dim} \ N/N \cap \Phi(L)=t$, then
\[\frac{1}{2}t(2s+t-1) \le \mathrm{dim} \ M(L,N) \ + \ \mathrm{dim} \ [L,N].\]
\end{thm}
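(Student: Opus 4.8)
My plan is to reduce both inequalities to a single dimension count. Fix a free presentation $0\to R\to F\to L\to 0$ and write $N=S/R$ for an ideal $R\subseteq S\subseteq F$. From the relative Hopf formula of \cite{sal4,n7}, $M(L,N)\cong (R\cap[F,S])/[F,R]$, while $[L,N]=([F,S]+R)/R\cong [F,S]/(R\cap[F,S])$. Since $[F,R]\subseteq R\cap[F,S]\subseteq [F,S]$, adding dimensions along this filtration yields the identity $\dim M(L,N)+\dim[L,N]=\dim [F,S]/[F,R]$, and I would read both bounds off this quantity.

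For the upper bound I would exhibit a spanning set of $[F,S]/[F,R]$. The bracket gives a well-defined linear surjection $\phi\colon L\otimes N\to [F,S]/[F,R]$, $\bar x\otimes\bar s\mapsto [x,s]+[F,R]$ (well defined since $[R,S]\subseteq[F,R]$). Antisymmetry makes $\phi$ kill the symmetric part of $N\otimes N\subseteq L\otimes N$, since $\phi(\bar s\otimes\bar{s'}+\bar{s'}\otimes\bar s)=[s,s']+[s',s]+[F,R]=0$; so after splitting $L=N\oplus C$ with $\dim C=u$ the map $\phi$ factors through $\wedge^2 N\oplus(C\otimes N)$. Hence $\dim M(L,N)+\dim[L,N]\le \binom{n-u}{2}+u(n-u)$, and because $\binom{n-u}{2}\le\binom{n}{2}$ and $u(n-u)\le un$ this is at most $\tfrac12 n(2u+n-1)$, giving the first (in fact slightly stronger) inequality.

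For the lower bound I would compare $(L,N)$ with its abelianization. As $L$ is nilpotent, $\Phi(L)=L^2$, so $\bar L=L/\Phi(L)$ is abelian of dimension $s+t$, its subalgebra $\bar N=(N+\Phi(L))/\Phi(L)$ has dimension $t$, and $\dim\bar L/\bar N=s$. Presenting $(\bar L,\bar N)$ by $\bar R=R+F^2$ and $\bar S=S+F^2$, the inclusions $R\subseteq\bar R$ and $S\subseteq\bar S$ induce $[F,S]/[F,R]\to [F,\bar S]/[F,\bar R]$, which is surjective because $[F,\bar S]=[F,S]+F^3$ while $F^3\subseteq[F,\bar R]$. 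Thus $\dim M(L,N)+\dim[L,N]\ge\dim M(\bar L,\bar N)$, using $[\bar L,\bar N]=0$. Finally, for the abelian pair $M(\bar L,\bar N)\cong\bar N\wedge\bar L\subseteq\wedge^2\bar L$, so $\dim M(\bar L,\bar N)=\binom{s+t}{2}-\binom{s}{2}=\binom{t}{2}+st=\tfrac12 t(2s+t-1)$, which is the asserted lower bound.

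The step that needs the most care is the master identity: I must fix the precise relative Hopf formula (the denominator $[F,R]$ rather than $[R,S]$ is what makes the dimensions telescope) and check both the additivity along the filtration and the presentation-independent identification $[L,N]\cong[F,S]/(R\cap[F,S])$. Once this is in place, the two bounds follow from the elementary surjections above; the only residual point is the value $\dim M(\bar L,\bar N)=\binom{t}{2}+st$ for abelian pairs, which can alternatively be extracted from the tail of \eqref{pairs} applied to $\bar L$.
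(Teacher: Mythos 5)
Your whole plan rests on the ``master identity'' $\dim M(L,N)+\dim[L,N]=\dim[F,S]/[F,R]$, i.e.\ on the relative Hopf formula $M(L,N)\cong(R\cap[F,S])/[F,R]$, and this is precisely where the proposal fails: for the object the theorem is about --- the functorial $M(L,N)$ defined by the mapping cone and the exact sequence \eqref{pairs} --- that formula is false in general. What is true is the following. Computing $M(L)$ and $M(L/N)$ by Hopf's formula from the same free presentation, one finds $\ker\big(M(L)\to M(L/N)\big)=(R\cap[F,S])/[F,R]$; by exactness of \eqref{pairs} this kernel equals the image of $M(L,N)\to M(L)$, so $M(L,N)$ only \emph{surjects} onto your Hopf quotient, with kernel isomorphic to the cokernel of $H_3(L)\to H_3(L/N)$. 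Hence in general one has only $\dim M(L,N)+\dim[L,N]\ \ge\ \dim[F,S]/[F,R]$, not equality. The discrepancy occurs already for small nilpotent algebras: take $L=H(1)\oplus A(1)$ and $N=L^2$. Then $N$ is central and $[L,N]=0$, so by the identification used in the paper's own proof ($M(L,N)=\ker\kappa'$ with $\kappa'\colon L\wedge N\to[L,N]$) we get $M(L,N)\cong L\wedge N\cong L/L^2\otimes N$, which is $3$-dimensional; but the Hopf quotient $(R\cap[F,S])/[F,R]$ is $2$-dimensional, the missing dimension being $\mathrm{coker}\big(H_3(L)\to H_3(L/N)\big)$: here $H_3(L/N)=\wedge^3(L/N)$ is one-dimensional, while every $3$-cycle in the Chevalley--Eilenberg complex of $L$ contains the central element spanning $L^2$, so $H_3(L)\to H_3(L/N)$ is zero. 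Consequently your upper-bound argument collapses: the spanning-set computation correctly bounds $\dim[F,S]/[F,R]$ from above, but that is the wrong side of the true inequality, so it gives no control on $\dim M(L,N)+\dim[L,N]$. (Part of the confusion may be that some of the cited literature takes the Hopf-type expression as the \emph{definition} of the pair multiplier; for the $M(L,N)$ of \eqref{pairs} the two objects genuinely differ.)

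The lower bound is salvageable, because there you only use the correct direction $\dim M(L,N)+\dim[L,N]\ge\dim[F,S]/[F,R]\ge\dim[F,\bar S]/[F,\bar R]$: your surjectivity step ($[F,\bar S]=[F,S]+F^3$ and $F^3\subseteq[F,\bar R]$) is fine, and the last quantity can be computed honestly inside the free Lie algebra, since $[F,\bar S]/[F,\bar R]$ is a quotient of subspaces of $F^2/F^3\cong\wedge^2(F/F^2)$ and has dimension $\binom{s+t}{2}-\binom{s}{2}=\frac12 t(2s+t-1)$. But you must then justify the first inequality without the false identity --- for instance exactly as the paper does, via $\dim L\wedge N=\dim M(L,N)+\dim[L,N]$ together with the commutator-induced epimorphism out of $L\wedge N$; note also that your appeal to the Hopf formula for the abelianized pair is legitimate only because $H_3(\bar L)\to H_3(\bar L/\bar N)$ is onto when $\bar L$ is abelian, which deserves a sentence. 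Once repaired, this half essentially reproduces the paper's proof of its claims $(\dag)$ and $(\dag\dag)$, which abelianizes via $\Phi(L)=L^2$ and evaluates the abelian pair by Lemma \ref{l:3} and the complement formula of \cite{ris1}. Your upper-bound counting can also be saved, but only by transplanting it from $[F,S]/[F,R]$ to $L\wedge N$: since $x\wedge x=0$ for all $x\in N$, the exterior product $L\wedge N$ is spanned by the images of $\wedge^2N$ and $C\otimes N$ for any vector-space complement $C$ of $N$, which yields $\dim M(L,N)+\dim[L,N]\le\binom{n-u}{2}+u(n-u)$, in fact sharper than the stated bound. The paper itself proves the upper bound by a different mechanism altogether, namely induction on a one-dimensional central ideal $C\subseteq Z(N)$ through the sequences \eqref{schurpairs} and \eqref{natural}.
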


When $N=L$ in Theorem \ref{t:4}, we get $u=0$ and find again \eqref{batten} so that Theorem \ref{t:4} is a generalization of \eqref{batten}. On the other hand, Theorem \ref{t:4} improves most of the bounds in \cite[Theorem B]{sal2}, where $L$ is assumed to be factorized.

The last  main theorem describes a condition of complementation. We may introduce $M(L,N)$ functorially from a wider perspective (originally, this is due to Ellis in \cite{ellis} for groups).  Let $B(L)$ be a classifying space such that
\begin{itemize}
\item[(i)] The topological space $B(L)$ is a connected CW--complex;
\item[(ii)] There exists a functor $\pi_n$ from the category of topological spaces to that of  Lie algebras such that $\pi_1(B(L)) \simeq L$;
\item[(iii)] The Lie algebras $\pi_n(B(L))$ are trivial for all $n \ge 2$.
\end{itemize}
Since the homology Lie algebras $H_n(B(L))$ (with integral coefficients) depend only on $L$, we have $H_n (L) = H_n (B(L))$ for all $n \ge 0$.
For each ideal $I$ of $L$ we may construct functorially a space $B(L,I)$
as follows. The quotient homomorphism $L \twoheadrightarrow  L/I$ induces a map $f : B(L) \rightarrow
B(L/I)$. Let $M(f)$ denote the mapping cylinder of this map. Note that $B(L)$
is a subspace of $M(f)$, and that $M(f)$ is homotopy equivalent to $B(L/I)$. We
take $B(L,I)$ to be mapping cone of the cofibration $B(L) \rightarrow M(f)$.
The cofibration sequence \[B(L) \longrightarrow M (f) \longrightarrow B(L,I)\] yields a natural long
exact homology sequence of Mayer--Vietoris
\[\ldots \longrightarrow  H_{n+1}(L/I) \longrightarrow H_{n+1} B(L,I) \longrightarrow H_n(L) \longrightarrow H_n(L/I) \longrightarrow \ldots
 \ \ \forall n \ge 0.\] It is straightforward to see that
\[H_1(B(L,I)) = 0 \ \mathrm{and} \ H_2(B(L,I)) \simeq I/[L,I] \ \mathrm{and} \ M(L,I)=H_3(B(L,I)).\]
These complications of homological nature have a positive consequence: we may treat the topic with more generality. By a \textit{triple} we mean a Lie algebra $L$ with two ideals $I$ and $J$ and by \textit{homomorphism of triples} $(L,I,J) \rightarrow (L',I',J')$ we mean a  homomorphism of Lie algebras $L \rightarrow L'$ that sends $I$ into $I'$ and $J$ into $J'$. The \textit{Schur multiplier of the triple}
$(L,I,J)$ will be the functorial abelian Lie algebra $M(L,I,J)$ defined by the
 natural exact sequence
\begin{equation}\label{triples}H_3(L,I) \longrightarrow H_3\left(\frac{L}{I}, \frac{I+J}{J}\right) \longrightarrow M(L,I,J) \longrightarrow M(L,J)
\longrightarrow M \left(\frac{L}{I}, \frac{I+J}{I}\right) \longrightarrow \end{equation}
\[ \longrightarrow \frac{I \cap J}{[L, I \cap J]+[I,J]}
\longrightarrow \frac{J}{[L,J]}  \longrightarrow \frac{I+J}{I+[L,J]} \longrightarrow 0,\]
where $H_3(L,I)=H_4(B(L,I))$ and $M(L,I,J)=H_4(B(L,I,J))$ is defined in terms of the mapping cone $B(L,I,J)$ of the canonical cofibration $B(L,I) \twoheadrightarrow B(L/J,I+J/I)$. Our last result is the following.

\begin{thm}\label{t:5} Let $L$ be a finite dimensional Lie algebra with two ideals $I$ and $J$ of $L$ such that $L=I+J$ and $I \cap J=0$. Then
\[\mathrm{dim} \ M(L,I,J)=\mathrm{dim} \ M(L,J) \ - \ \mathrm{dim} \ M(J). \] Moreover, if  $K \subseteq J \cap Z(L)$, then
\[\mathrm{dim} \ M(L,I,J)  +  \mathrm{dim} \ M(J) +  \mathrm{dim} \ K \cap [L,J]\]
\[ \le \mathrm{dim} \  M (L/K,J/K)  +  \mathrm{dim} \  M(K)  +  \mathrm{dim} \  \frac{L}{L^2+K} \cdot \mathrm{dim} \ K.\]
\end{thm}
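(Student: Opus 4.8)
The plan is to feed the hypotheses into the triple sequence \eqref{triples}. First I would observe that, since $I$ and $J$ are ideals with $I\cap J=0$, one has $[I,J]\subseteq I\cap J=0$, so $L=I\oplus J$ is a direct sum of ideals and $L/I\cong J$. Under this identification $(I+J)/I=L/I\cong J$, whence $M\big(L/I,(I+J)/I\big)\cong M(J,J)=M(J)$ (the case $N=L$ of \eqref{pairs} gives $M(L,L)=M(L)$), while the term $(I\cap J)/([L,I\cap J]+[I,J])$ vanishes. Reading \eqref{triples} with these simplifications, the homology terms being identified through $L/I\cong J$, leaves the exact piece
\[H_3(L,I)\longrightarrow H_3(J)\longrightarrow M(L,I,J)\longrightarrow M(L,J)\longrightarrow M(J)\longrightarrow 0.\]

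Next I would exploit the retraction $\rho\colon L\to J$ with section the inclusion $J\hookrightarrow L$ (so $\rho|_J=\id$). It induces a morphism of pairs $(L/I,(I+J)/I)\to(L,J)$ splitting the quotient $(L,J)\to(L/I,(I+J)/I)$, so that $M(L,J)\to M(J)$ is a split surjection. By exactness, $\im\big(M(L,I,J)\to M(L,J)\big)=\ker\big(M(L,J)\to M(J)\big)$ has dimension $\dim M(L,J)-\dim M(J)$. It remains to show $M(L,I,J)\to M(L,J)$ is injective, equivalently that the connecting map $H_3(J)\to M(L,I,J)$ vanishes; this is the heart of the argument. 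I would deduce it from the same retraction: the cofibration $B(L)\to B(L/I)=B(J)$ defining $B(L,I)$ is split by $B(\rho)$, so $\rho_*$ is onto in every degree and the connecting homomorphisms in the Mayer--Vietoris sequence of the triple die in the relevant range. This yields the short exact sequence $0\to M(L,I,J)\to M(L,J)\to M(J)\to 0$, and additivity of dimension gives the first assertion.

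For the second assertion I would first substitute the first assertion to rewrite the left-hand side as $\dim M(L,J)+\dim(K\cap[L,J])$. Since $K\subseteq J\cap Z(L)$, the ideals $K\subseteq J$ satisfy $[L,K]=[K,J]=0$, and applying \eqref{triples} to the triple $(L,K,J)$ collapses (using $(K+J)/J=0$, hence $H_3(L/K,0)=0$) to
\[0\longrightarrow M(L,K,J)\longrightarrow M(L,J)\longrightarrow M\big(L/K,J/K\big)\xrightarrow{\ \beta\ }K\longrightarrow\frac{J}{[L,J]}\longrightarrow\cdots,\]
where $K\to J/[L,J]$ is induced by $K\hookrightarrow J$ and so has kernel $K\cap[L,J]=\im\beta$. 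A dimension count along this sequence gives the exact identity
\[\dim M(L,J)+\dim(K\cap[L,J])=\dim M(L,K,J)+\dim M\big(L/K,J/K\big).\]

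Thus the second assertion is equivalent to the single inequality $\dim M(L,K,J)\le\dim M(K)+\dim K\cdot\dim\frac{L}{L^2+K}$. Here I would invoke the Lie-algebra form of the Ganea--Stallings bound for the central ideal $K$, as adapted in \cite{sal4,ris1,ris2,sal6,n7}: since $K$ is central it is abelian, so $M(K)$ is its exterior square, and the Ganea term is $K\otimes(L/K)^{\mathrm{ab}}$; centrality gives $(L/K)^{\mathrm{ab}}=L/(L^2+K)$, so this term has dimension $\dim K\cdot\dim\frac{L}{L^2+K}$, and exactness of the relative Ganea sequence yields the displayed bound. As a consistency check, taking $J=L$ reduces the identity above to $\dim M(L,K,L)=\dim M(L)-\dim M(L/K)+\dim(L^2\cap K)$ and the inequality to the classical central bound $\dim M(L)+\dim(L^2\cap K)\le\dim M(L/K)+\dim M(K)+\dim\big((L/K)^{\mathrm{ab}}\otimes K\big)$. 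The main obstacle is precisely this last step: fixing the correct relative Ganea sequence so that the Ganea term is exactly $K\otimes(L/K)^{\mathrm{ab}}$ and no spurious contributions survive; everything else is exactness and dimension bookkeeping.
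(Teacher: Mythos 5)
Your first claim is handled correctly and along the same lines as the paper: both arguments collapse the eight-term sequence \eqref{triples} using $I\cap J=0$ and identify $M(L/I,(I+J)/I)$ with $M(J)$. In fact you do slightly more than the paper here, which simply asserts that \eqref{triples} ``induces'' the short exact sequence $0\to M(L,I,J)\to M(L,J)\to M(L/I,L/I)\to 0$; your retraction argument (the section $J\hookrightarrow L$ of $L\twoheadrightarrow L/I$ forces the connecting map to vanish, in the same spirit as \eqref{general}) is exactly the justification that assertion needs.

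The genuine gap is in the second claim. The paper's proof is a one-line substitution: since $I$ is a complement of $J$ and $K\subseteq J\cap Z(L)$, Corollary \ref{c:2} --- a quoted result of Araskhan and Rismanchian (\cite{ris1}, Theorems 2.3 and 2.8) --- applies to the pair $(L,J)$ and gives
\[\dim \ M(L,J)+\dim \ K\cap[L,J]\ \le \ \dim \ M(L/K,J/K)+\dim \ M(K)+\dim \ \frac{L}{L^2+K}\cdot \dim \ K,\]
after which one substitutes $\dim M(L,J)=\dim M(L,I,J)+\dim M(J)$ from the first claim. Your alternative route --- running \eqref{triples} on the triple $(L,K,J)$ --- is sound as bookkeeping (for the inequality you actually need, exactness alone suffices; note, however, that your stronger ``exact identity'' rests on the vanishing of the term $H_3(L/K,(K+J)/J)$, i.e.\ on reading what is likely a typo in \eqref{triples} literally). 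But this reduction only trades the theorem for the equivalent inequality $\dim M(L,K,J)\le\dim M(K)+\dim K\cdot\dim\frac{L}{L^2+K}$, and this you never prove: you appeal to a ``relative Ganea sequence'' that you do not construct, and you yourself flag it as the main obstacle. The appeal is not innocuous, because the Ganea-type sequences actually available in this context --- Lemma \ref{l:5}(i), \eqref{schurpairs} and \eqref{natural} --- yield bounds of the shape $\dim M(L,N)\le\dim M(L/C,N/C)+\dim\bigl(C\otimes \frac{L}{L^2+C}\bigr)$ with no $M(C)$ summand at all; the extra $\dim M(K)$ term comes from a genuinely finer analysis of the central extension $0\to K\to L\to L/K\to 0$ (a K\"unneth/Hochschild--Serre type argument), which is precisely the content of \cite[Theorem 2.3]{ris1} that the paper imports as Corollary \ref{c:2}. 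As written, the second half of your proposal therefore reformulates the statement rather than proving it; to close it, either supply a proof of that final inequality or invoke the Araskhan--Rismanchian result as the paper does.
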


\section{Proofs of the  results}

The following property deals with the low dimensional homology of sums and is a crucial instrument in the proofs of our main results.

\begin{lem}[See \cite{rot}, Theorem 11.31, K\"unneth Formula] \label{l:1}  Two finite dimensional Lie algebras $H$ and $K$ satisfy  the condition \[M(H\oplus K)=M(H) \oplus M(K) \oplus (H/H^2 \otimes K/K^2). \] In particular, \[\mathrm{dim} \ M(H \oplus K)= \mathrm{dim} \ M(H) \  + \ \mathrm{dim} \ M(K) \ + \ \mathrm{dim} \ H/H^2 \otimes K/K^2.\]
\end{lem}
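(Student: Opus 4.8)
The plan is to identify the Schur multiplier with the second homology Lie algebra, $M(L)=H_2(L)$, and to read off the asserted decomposition from the algebraic Künneth formula quoted in the statement. I work over a ground field $F$, so that every homology space is an $F$-vector space; the numerical identity will then follow from the first (module-theoretic) identity by additivity of $\mathrm{dim}$ over direct sums together with $\mathrm{dim}(V\otimes W)=\mathrm{dim} \ V \cdot \mathrm{dim} \ W$.

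First I would recall that $H_\bullet(L)$ is the homology of the Chevalley--Eilenberg complex $(\Lambda^\bullet L,\,d)$, whose boundary map $d\colon \Lambda^p L \to \Lambda^{p-1}L$ is assembled from the bracket of $L$. The structural heart of the argument is the observation that, when $L=H\oplus K$, the bracket has no terms mixing $H$ with $K$; consequently the canonical graded isomorphism $\Lambda^\bullet(H\oplus K)\cong \Lambda^\bullet H \otimes \Lambda^\bullet K$ carries $d$ to the tensor-product differential $d_H\otimes 1 \pm 1\otimes d_K$. Thus the Chevalley--Eilenberg complex of $H\oplus K$ is isomorphic, as a chain complex, to the tensor product of the complexes of $H$ and of $K$, which is exactly the situation to which the cited Künneth formula applies. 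I expect this verification --- the vanishing of the cross terms and the attendant sign bookkeeping --- to be the only genuinely delicate point; everything afterwards is formal.

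Granting this, the Künneth formula over the field $F$, where the $\mathrm{Tor}$ contributions vanish, gives $H_n(H\oplus K)\cong \bigoplus_{i+j=n}H_i(H)\otimes H_j(K)$. Specializing to $n=2$ and using the standard low-degree identifications $H_0(L)=F$, $H_1(L)=L/L^2$ and $H_2(L)=M(L)$, the only surviving summands are $H_2(H)\otimes H_0(K)$, $H_1(H)\otimes H_1(K)$ and $H_0(H)\otimes H_2(K)$. These amount precisely to $M(H)\oplus(H/H^2\otimes K/K^2)\oplus M(K)$, and passing to dimensions yields the displayed numerical formula, completing the plan.
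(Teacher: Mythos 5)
Your proposal is correct and takes essentially the same route as the paper: the paper offers no argument of its own for this lemma, proving it simply by citation to the K\"unneth formula (Rotman, Theorem 11.31), and your proof---identifying $M(L)$ with $H_2(L)$, splitting the Chevalley--Eilenberg complex of $H\oplus K$ as the tensor product of the complexes of $H$ and $K$ (using that the bracket has no cross terms), and then applying the K\"unneth theorem over the ground field with the low-degree identifications $H_0(L)=F$, $H_1(L)=L/L^2$, $H_2(L)=M(L)$---is precisely the standard argument underlying that citation. The only point to keep in mind is that over a field the $\mathrm{Tor}$ terms vanish automatically, which you correctly note, so the decomposition and the dimension count follow as you describe.
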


The dimension of the Schur multiplier of abelian Lie algebras is a classic.

\begin{lem}[See \cite{es1}, Lemma 3]\label{l:3} $L \simeq A(n)$  if and only if $\mathrm{dim} \ M(L) =\frac{1}{2}n(n-1)$.
\end{lem}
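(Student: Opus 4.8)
The plan is to prove the two implications separately: the forward direction by a direct dimension computation, and the converse by showing that every non-abelian $L$ sits \emph{strictly} below the Batten bound \eqref{batten}, so that equality can only occur in the abelian case.

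For the forward implication I would suppose $L\simeq A(n)$ and decompose $A(n)=A(1)\oplus A(n-1)$, then apply the K\"unneth Formula of Lemma \ref{l:1}. Since $A(1)$ is one-dimensional, $M(A(1))=0$, while $A(1)/A(1)^2\otimes A(n-1)/A(n-1)^2$ has dimension $1\cdot(n-1)$ because both factors are abelian. This yields the recursion $\mathrm{dim}\ M(A(n))=\mathrm{dim}\ M(A(n-1))+(n-1)$, which together with $\mathrm{dim}\ M(A(1))=0$ gives $\mathrm{dim}\ M(A(n))=\sum_{k=1}^{n-1}k=\tfrac12 n(n-1)$ by induction on $n$. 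As an independent check, Witt's Formula \eqref{moebius} identifies $M(A(n))$ with $F^2/F^3$, whose dimension is $l_n(2)=\tfrac12(n^2-n)$, confirming the value.

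For the converse I would argue by contraposition: assume $L$ is non-abelian and deduce $\mathrm{dim}\ M(L)<\tfrac12 n(n-1)$. Since $L$ is nilpotent and non-abelian, $m:=\mathrm{dim}\ L^2\ge 1$, and moreover $n\ge 3$, because the lowest-dimensional non-abelian nilpotent Lie algebra is the Heisenberg algebra $H(1)$ of dimension $3$ (in dimensions $1$ and $2$ every nilpotent Lie algebra is abelian, so no case is lost). Subtracting \eqref{nice} from \eqref{batten}, a short expansion gives
\[ \tfrac12 n(n-1)-\Big(\tfrac12(n+m-2)(n-m-1)+1\Big)=(n-2)+\tfrac12 m(m-1)\ge n-2\ge 1, \]
whence $\mathrm{dim}\ M(L)\le\tfrac12 n(n-1)-1<\tfrac12 n(n-1)$. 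Thus equality in \eqref{batten} forces $m=0$, i.e. $L$ is abelian, and therefore $L\simeq A(n)$.

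The hard part is not the arithmetic but keeping the converse logically independent of \eqref{nice}, since that bound was itself established in the authors' earlier work and invoking it here risks a circular citation chain. To sidestep this I would instead use the fundamental exact sequence $0\to M(L)\to L\wedge L\to L^2\to 0$ for the non-abelian exterior square, where the right-hand map is induced by the commutator $x\wedge y\mapsto[x,y]$. Because $L\wedge L$ is generated by antisymmetric bilinear symbols $x\wedge y$, it is a quotient of $\Lambda^2 L$, so $\mathrm{dim}(L\wedge L)\le\binom{n}{2}$ and hence $\mathrm{dim}\ M(L)\le\tfrac12 n(n-1)-\mathrm{dim}\ L^2$; equality then forces $\mathrm{dim}\ L^2=0$, which is precisely abelianness, with no appeal to \eqref{nice} and no case distinction on $n$. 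I therefore expect the main obstacle to be purely expository — selecting the self-contained form of the converse — and the exterior-square sequence is the cleanest tool for it.
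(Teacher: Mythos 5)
Your proposal is correct, but it cannot be compared against an internal argument: the paper offers no proof of this lemma at all, importing it verbatim from \cite{es1} (Lemma 3), so any valid proof you give is necessarily ``different from the paper's.'' Judged on its own merits, your argument is sound, and your self-criticism in the last paragraph is exactly right. The forward direction via the K\"unneth decomposition $A(n)=A(1)\oplus A(n-1)$ and Lemma \ref{l:1} is the standard induction and is correct (as is the Witt-formula cross-check $M(A(n))\simeq F^2/F^3$ of dimension $l_n(2)=\tfrac12(n^2-n)$, which follows from Hopf's formula with $R=F^2$). Your first version of the converse, via \eqref{nice}, is arithmetically fine --- the difference $\tfrac12 n(n-1)-\bigl(\tfrac12(n+m-2)(n-m-1)+1\bigr)=\tfrac12 m(m-1)+n-2$ checks out --- but it has the two defects you identify: it is restricted to nilpotent $L$, whereas the characterization holds for arbitrary finite-dimensional Lie algebras, and it leans on \eqref{nice}, whose proof in \cite{n2,n3} itself rests on this very lemma, so the citation chain would be circular in substance even if formally legal. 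Your replacement argument is the right one and is, in fact, essentially the classical proof in the literature (Moneyhun's bound $\dim M(L)+\dim L^2\le\tfrac12 n(n-1)$, which underlies \cite{es1}): the sequence $0\to M(L)\to L\wedge L\to L^2\to 0$ together with the linear surjection $\Lambda^2 L\twoheadrightarrow L\wedge L$ (valid for Lie algebras, since the defining relations of the nonabelian exterior square are genuinely bilinear and alternating, and brackets of symbols are again symbols) gives $\dim M(L)\le\binom{n}{2}-\dim L^2$, so equality forces $L^2=0$. Note that this is precisely the mechanism the present paper deploys in relative form in the proof of Theorem \ref{t:4}, where $\dim\ L\wedge N=\dim\ M(L,N)+\dim\ [L,N]$; your argument is the specialization $N=L$. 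One cosmetic suggestion: drop the \eqref{nice}-based converse entirely rather than presenting both, since the exterior-square argument subsumes it without the nilpotency hypothesis or the $n\ge 3$ case analysis.
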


Now we may specify \eqref{salemkar}.

\begin{lem}\label{l:4} If  a nilpotent Lie algebra $L$ of $\mathrm{dim} \ L=n$ has $\mathrm{dim} \ L^2=m$ and $\mathrm{dim} \ Z(L)=d$, then \eqref{salemkar} becomes
\[\mathrm{dim} \ M(L) \le \frac{1}{2}(n-m)(n-m-1)+m(n-d-1).\]
\end{lem}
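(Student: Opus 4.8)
The plan is to rewrite each of the two summands on the right-hand side of \eqref{salemkar} purely in terms of the invariants $n$, $m$ and $d$, after which the asserted identity drops out by a direct substitution. Since the statement merely reformulates \eqref{salemkar}, no new homological input is needed beyond Lemma \ref{l:3}: the entire content is bookkeeping on dimensions.

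First I would dispose of the second summand. As $\mathrm{dim} \ Z(L)=d$, we have $\mathrm{dim} \ L/Z(L)=n-d$, and hence
\[\mathrm{dim} \ L^2 \, (\mathrm{dim} \ L/Z(L)-1)=m(n-d-1),\]
which is already exactly the second term in the target expression.

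Next I would analyse the first summand $\mathrm{dim} \ M(L/L^2)$. The key observation is that $L^2$ is by definition the derived ideal of $L$, so every bracket vanishes in the quotient and $L/L^2$ is abelian; its dimension is $n-m$, whence $L/L^2 \simeq A(n-m)$. The only step requiring a moment's care is this identification, but it is immediate once one recalls that an abelian Lie algebra is determined up to isomorphism by its dimension. Applying the forward direction of Lemma \ref{l:3} then gives
\[\mathrm{dim} \ M(L/L^2)=\frac{1}{2}(n-m)(n-m-1).\]

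Combining the two computations and feeding them into \eqref{salemkar} yields
\[\mathrm{dim} \ M(L)\le \frac{1}{2}(n-m)(n-m-1)+m(n-d-1),\]
as claimed. I do not expect any genuine obstacle here: the argument is a substitution, and the only thing one must check is that $L/L^2$ satisfies the hypotheses of Lemma \ref{l:3}, which it does since it is abelian of dimension $n-m$.
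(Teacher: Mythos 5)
Your proposal is correct and takes essentially the same route as the paper: identify $L/L^2$ with $A(n-m)$, apply Lemma \ref{l:3} to obtain $\mathrm{dim} \ M(L/L^2)=\frac{1}{2}(n-m)(n-m-1)$, and substitute $\mathrm{dim} \ L/Z(L)=n-d$ into the second summand of \eqref{salemkar}. In fact your computation of the second term is stated more carefully than the paper's own proof, which contains the slip ``$\mathrm{dim} \ L/Z(L)=m-d$'' where $n-d$ is meant.
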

\begin{proof} This is an application of Lemma \ref{l:3}, noting that $\mathrm{dim} \ L/L^2 =\mathrm{dim} \ L - \mathrm{dim} \ L^2= \mathrm{dim} \ A(n-m)=n-m$ and $\mathrm{dim} \ L/Z(L)= \mathrm{dim} \ L - \mathrm{dim} \ Z(L)=m-d$.
\end{proof}

\begin{proof}[Proof of Theorem \ref{t:1}] From Lemma \ref{l:4}, \eqref{salemkar} becomes
\[\mathrm{dim} \ M(L) \le \frac{1}{2}(n-m)(n-m-1)+m(n-d-1)\]
\[=\frac{1}{2}(n^2-nm-n-nm+m^2+m)+mn-dm-m\]
\[=\frac{1}{2}(n^2+m^2+m-n)-dm-m\]
\[=\frac{1}{2}(n^2+m^2)+\frac{1}{2}m-m-dm-\frac{1}{2}n\]
\[=\frac{1}{2}(n^2+m^2)-\frac{1}{2}m-dm-\frac{1}{2}n\]
\[=\frac{1}{2}(n^2+m^2)-\left(d+\frac{1}{2}\right)m-\frac{1}{2}n.\]
On the other hand, \eqref{nice} becomes
\[\mathrm{dim} \ M(L) \le \frac{1}{2}(n+m-2)(n-m-1)+1\]
\[=\frac{1}{2}(n^2-nm-n+nm-m^2-m-2n+2m+2)+1\]
\[=\frac{1}{2}(n^2-m^2)+\frac{1}{2}m-\frac{3}{2}n+2.\]
Of course, the first terms satisfy $\frac{1}{2}(n^2-m^2) \le \frac{1}{2}(n^2+m^2)$ for all $m,n \ge1$, but the remaining terms satisfy
\[\frac{1}{2}m-\frac{3}{2}n+2 \le -\left(d+\frac{1}{2}\right)m-\frac{1}{2}n \Leftrightarrow 0 \le  -(d+1) m + n - 2\]
\[ \Leftrightarrow 0 \ge (d+1)m-n+2 \Leftrightarrow  m \le  \Big\lfloor \frac{n-2}{d+1}\Big\rfloor. \]
It follows that \eqref{nice} is better than \eqref{salemkar} for these values of $m$.
\end{proof}

In order to prove Theorem \ref{t:4},  we should note that the Schur multiplier of a pair $(L,N)$ induce the following exact sequence
 \begin{equation}\label{schurpairs}   \longrightarrow M(L,C) \longrightarrow M(L,N) \longrightarrow  M(L/C, N/C) \longrightarrow  0 ,\end{equation}
where $C \subseteq Z(N)$.
Moreover, it is not hard to check that
the following exact sequence is induced by natural epimorphisms
\begin{equation}\label{natural} C \otimes \frac{L}{L^2+C}  \longrightarrow M(L,C). \end{equation}

Now we may prove Theorem \ref{t:4}.

\begin{proof}[Proof of Theorem \ref{t:4}]
We begin to prove the lower bound. We claim that
\[\mathrm{dim} \ M\left(\frac{L}{\Phi(L)}, \frac{N}{N \cap \Phi(L)} \right) \le  \mathrm{dim} \ M(L,N) + \mathrm{dim} \ [L,N]. \leqno(\dag)\]
Note from  \cite[Corollary 2, p.420]{marshall} that $\Phi(L)=L^2$ is always true for nilpotent Lie algebras. Then $L/\Phi(L)$ and $N/N \cap \Phi(L) \simeq N + \Phi(L)/\Phi(L) \subseteq L/\Phi(L)$ are abelian.  In our situation,
\[  M\left(\frac{L}{\Phi(L)}, \frac{N}{N \cap \Phi(L)} \right) \simeq  \frac{L}{\Phi(L)} \wedge \frac{N}{N \cap \Phi(L)}  \simeq \frac{L}{L^2} \wedge \frac{N}{N \cap L^2}. \]
Now we should recall from \cite{n2,n3,n4,n7} that  this is a classical situation in which it is possible to consider certain compatible actions by conjugation of $L$ over $N$ (and viceversa) which allows us to construct  the nonabelian tensor product $L \otimes N$ of $L$ and $N$. This construction has several properties, useful to our scopes. For instance, one can see that  $M(L,N) \simeq L \wedge N = L \otimes N/ L \square N,$ where
$L \square N =\langle x \otimes x  \ | \ x \in L \cap N \rangle $, and that the map
\[\kappa' : x \wedge y \in L \wedge N \longmapsto \kappa'(x \wedge y)=[x,y]\in [L,N]\]
is an epimorphism of Lie algebras with $\ker \kappa ' = M(L,N)$  such that
\[\mathrm{dim} \ L \wedge N = \mathrm{dim} \ M(L,N) + \mathrm{dim} \ [L,N].\]
On the other hand,  \[x \wedge y \in L \wedge N  \longmapsto x+L^2 \wedge y+(N\cap L^2) \in \frac{L}{L^2} \wedge \frac{N}{N \cap L^2}\] is also an epimorphism of Lie algebras and it  implies \[\mathrm{dim} \ L \wedge N \ge \mathrm{dim} \  \frac{L}{L^2} \wedge \frac{N}{N \cap L^2} \] so that
\[\mathrm{dim} \   M\left(\frac{L}{\Phi(L)}, \frac{N}{N \cap \Phi(L)} \right) = \mathrm{dim} \  \frac{L}{L^2} \wedge \frac{N}{N \cap L^2} \le \mathrm{dim} \ L \wedge N.\] The claim $(\dag)$  follows. Consequently, it will be enough to prove
\[\mathrm{dim} \ M\left(\frac{L}{\Phi(L)}, \frac{N}{N \cap \Phi(L)}\right) \le \frac{1}{2}t(2s+t-1) \leqno{(\dag \dag)} \]
 in order to conclude
\[\frac{1}{2}t(2s+t-1) \le \mathrm{dim} \ M(L,N) \ + \ \mathrm{dim} \ [L,N].\]
Since $N/N \cap \Phi(L) \simeq A(s)$ is a direct factor of the abelian Lie algebra $L/\Phi(L) \simeq A(s+t) \simeq A(s) \oplus A(t)$,
Lemma \ref{l:3} implies
\[\mathrm{dim} \ M\left(\frac{L}{\Phi(L)}\right)=\frac{1}{2}(s+t)(s+t-1) \ \mathrm{and} \ \mathrm{dim} \ M\left(\frac{L}{N+ \Phi(L)}\right)=\frac{1}{2}s(s-1).\]
On the other hand, we have (see for instance \cite[p.174]{ris1}) that
\[\mathrm{dim} \ M\left(\frac{L}{\Phi(L)}, \frac{N}{N \cap \Phi(L)}\right) =\mathrm{dim} \ M\left(\frac{L}{\Phi(L)}\right)- \mathrm{dim} \ M\left(\frac{L}{N+\Phi(L)}\right)\]
\[=\frac{1}{2}(s+t)(s+t-1) - \frac{1}{2}s(s-1) = \frac{1}{2}(s+t)(s+t-1)-\frac{1}{2}s(s-1)=\frac{1}{2}t(2s+t-1),\]
and so $(\dag \dag)$ is proved.

Now we prove the upper bound
\[\mathrm{dim} \ M(L,N) \le \frac{1}{2}n(2u+n-1).\]
Proceed by induction on $n$. Of course, the above inequality is true  when $n = 0$.
Suppose that it holds whenever $N$ is of dimension strictly less than $n$, and suppose
that $\mathrm{dim} \ N  = n$ . Let $C$ be a monodimensional Lie algebra contained in the center of $N$.
We are in the situation described by \eqref{schurpairs} and \eqref{natural}. Then
\[\mathrm{dim} \ M(L,N) \le
\mathrm{dim} \ M(L,C)+ \mathrm{dim} \ M \left(\frac{L}{C}, \frac{N}{C}\right)\]
\[ \le \mathrm{dim} \ C \otimes \frac{L/C}{(L/C)^2} \ + \ \mathrm{dim} \ M \left(\frac{L}{C}, \frac{N}{C}\right) \le \mathrm{dim} \ \frac{L/C}{(L/C)^2} +  \mathrm{dim} \ M \left(\frac{L}{C}, \frac{N}{C} \right)\]
\[=\mathrm{dim} \ \frac{L}{C} - \mathrm{dim} \ \left(\frac{L}{C}\right)^2 +  \mathrm{dim} \ M \left(\frac{L}{C}, \frac{N}{C} \right)
\le (u+n-1) +\frac{1}{2}(n-1) + \frac{1}{2}(2u+n-2)\]
\[=\frac{1}{2}n(2u+n-1),\]
as wished.
\end{proof}

From \cite[Equation 2.2]{bosko2} and  \cite{sal4,sal1,sal2,sal3},  we have a good description of the 2--dimensional homology over quotients and subalgebras. In fact it is possible to overlap the celebrated sequences of Ganea and Stallings in \cite[Theorem 2.5.6]{karp}, studied for groups almost 30 years ago, in the context of Lie algebras  (see  \cite{sal4, ris1, ris2, n7} for more details). We should also recall that a finite dimensional  Lie algebra $L$ is \textit{capable} if $L \simeq E/Z(E)$ for a suitable finite dimensional Lie algebra $E$ (see \cite{sal4, sal3, n6, n7}). The smallest  central subalgebra of a finite dimensional Lie algebra $L$ whose factor is capable is the \textit{epicenter} of $L$ and is denoted by $Z^*(L)$ (see \cite{sal4, n7}). Notice that finite dimensional Lie algebras are capable if and only if they have trivial epicenter, so that this ideal gives a measure of how far we are from having a Lie algebra which may be expressed as a central quotient.

\begin{lem}[See \cite{sal4}, Proposition 4.1 and Theorem 4.4]\label{l:5} Let $L$ be a finite dimensional  Lie algebra and $Z$ a central ideal of $L$. Then  the following  sequences are exact:
\begin{itemize}
\item[(i)] $ Z \otimes L/L^2   \rightarrow M(L) {\overset{\beta}\longrightarrow} M(L/Z) {\overset{\gamma}\longrightarrow} L^2 \cap Z \rightarrow 0$,
\item[(ii)] $M(L) {\overset{\beta}\longrightarrow} M(L/Z) \rightarrow Z \rightarrow L/L^2 \rightarrow L/Z+L^2 \rightarrow 0$,
\end{itemize}
where $\beta$ and $\gamma$ are induced by natural embeddings. Moreover, the following conditions are equivalent:
\begin{itemize}
\item[(j)] $M(L) \simeq M(L/Z)/(L^2 \cap Z)$,
\item[(jj)] The  map  $ \beta: M(L) \rightarrow M(L/Z)$ is a monomorphism,
\item[(jjj)] $Z \subseteq Z^*(L)$.
\end{itemize}
\end{lem}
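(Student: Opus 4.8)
The plan is to fix a free presentation $0\to R\to F\to L\to 0$ with $F$ free, and to let $S$ denote the preimage of $Z$ in $F$, so that $R\subseteq S$, $S/R\cong Z$, and the centrality of $Z$ reads $[F,S]\subseteq R$. By the Hopf formula for Lie algebras, $M(L)=(R\cap F^2)/[F,R]$ and $M(L/Z)=(S\cap F^2)/[F,S]$, and I will realise every arrow concretely on these quotients. The map $\beta$ is induced by the inclusion $R\subseteq S$; the map $\gamma$ (resp. the extended map into $Z$ in (ii)) sends the class of $x\in S\cap F^2$ to $x+(F^2\cap R)\in L^2\cap Z$ (resp. to $x+R\in Z$); and the leftmost arrow $\theta\colon Z\otimes L/L^2\to M(L)$ is the commutator pairing $z\otimes\ell\mapsto[s,f]+[F,R]$, where $s\in S,\ f\in F$ lift $z,\ell$. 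Each of these is well defined because $[F,S]\subseteq F^2\cap R$.

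First I would prove the two exact sequences by a direct diagram chase on these representatives. The only step that is not immediate is checking that $\theta$ is defined on $Z\otimes L/L^2$ (and not merely on $Z\otimes L$): this requires $[S,F^2]\subseteq[F,R]$, which I would obtain from the Jacobi identity $[s,[a,b]]=[[s,a],b]+[a,[s,b]]$, since both $[s,a],[s,b]\in[F,S]\subseteq R$. Granting this, the image of $\theta$ is generated by the $[s,f]$ and hence equals $[F,S]/[F,R]$; on the other hand $\ker\beta=\{x+[F,R]:x\in R\cap F^2\cap[F,S]\}=[F,S]/[F,R]$ because $[F,S]\subseteq R\cap F^2$. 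This single identity $\im\theta=\ker\beta=[F,S]/[F,R]$ gives exactness of (i) at $M(L)$, while exactness at $M(L/Z)$ and at $L^2\cap Z$ is the trivial computation $\ker\gamma=\im\beta$ together with surjectivity of $\gamma$. Sequence (ii) is then obtained by enlarging the target of $\gamma$ from $L^2\cap Z$ to $Z$ and splicing on the tautological maps $Z=S/R\hookrightarrow L/L^2\twoheadrightarrow L/(Z+L^2)$, whose exactness is read off from $\ker(Z\to L/L^2)=L^2\cap Z$.

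For the equivalences I would argue as follows. Since all spaces are finite dimensional, (j)$\Leftrightarrow$(jj) is a pure rank count: exactness of (i) gives $\mathrm{dim}\ M(L)=\mathrm{dim}\ \ker\beta+\mathrm{dim}\ \im\beta$ and $\mathrm{dim}\ \im\beta=\mathrm{dim}\ M(L/Z)-\mathrm{dim}(L^2\cap Z)$, so $\ker\beta=0$ holds precisely when $\mathrm{dim}\ M(L)=\mathrm{dim}\ M(L/Z)-\mathrm{dim}(L^2\cap Z)=\mathrm{dim}\big(M(L/Z)/(L^2\cap Z)\big)$, i.e. when $M(L)\cong M(L/Z)/(L^2\cap Z)$. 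For (jj)$\Leftrightarrow$(jjj) I would use the description of the epicenter on the cover $F/[F,R]$: writing $T=\{x\in F:[x,F]\subseteq[F,R]\}$ for the preimage of $Z(F/[F,R])$, one has $Z^*(L)=T/R$. Indeed $F/T\cong (F/[F,R])/Z(F/[F,R])$ is a central quotient, hence capable, which gives $Z^*(L)\subseteq T/R$; the reverse inclusion, that $T/R$ lies in every central ideal with capable quotient, is the substantive half (this is Proposition 4.1 of \cite{sal4}). Granting $Z^*(L)=T/R$, the condition $Z\subseteq Z^*(L)$ becomes $S\subseteq T$, i.e. $[F,S]\subseteq[F,R]$, which by the identity $\ker\beta=[F,S]/[F,R]$ is exactly the injectivity of $\beta$.

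The hard part will be the implication (jj)$\Leftrightarrow$(jjj), and more precisely the reverse inclusion $T/R\subseteq Z^*(L)$ in the epicenter description: proving that a central ideal with capable quotient must contain $T/R$ is where capability genuinely enters, via lifting an isomorphism $L/W\cong E/Z(E)$ back through the cover $F/[F,R]$. Everything else—both exact sequences and the equivalence (j)$\Leftrightarrow$(jj)—reduces to the Hopf formula and the commutator identities above, with the Jacobi computation $[S,F^2]\subseteq[F,R]$ being the only delicate point.
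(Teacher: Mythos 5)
This lemma is imported: the paper gives no proof of it, citing \cite{sal4} (Proposition 4.1 and Theorem 4.4) instead, so there is no internal argument to compare against; your free-presentation proof is essentially the standard one, and it is the route taken in the cited source. Your verifications are correct throughout: the Hopf-formula identifications $M(L)=(R\cap F^2)/[F,R]$, $M(L/Z)=(S\cap F^2)/[F,S]$, the Jacobi computation giving $[S,F^2]\subseteq[F,R]$ (which is indeed the one non-trivial point in defining $\theta$ on $Z\otimes L/L^2$ rather than $Z\otimes L$), and the key identity $\im\theta=\ker\beta=[F,S]/[F,R]$ all check out, as does the rank count for (j)$\Leftrightarrow$(jj) — and your reading of (j) as a dimension statement is the right one, since $L^2\cap Z$ arises as a quotient of $M(L/Z)$ via $\gamma$, not as a distinguished subalgebra, and this is also the only form in which the paper uses it (Corollary 2.6).

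The one place you lean on citation rather than proof is the inclusion $T/R\subseteq Z^*(L)$, but you isolate it correctly and your sketch closes in two lines: if $W$ is a central ideal with $L/W\simeq E/Z(E)$, lift $F\twoheadrightarrow L\twoheadrightarrow E/Z(E)$ to $\sigma\colon F\to E$ by freeness; then $\sigma(F)+Z(E)=E$ and $\sigma(R)\subseteq Z(E)$, whence $\sigma([F,R])\subseteq[E,Z(E)]=0$, so any $x\in T$ satisfies $[\sigma(x),\sigma(F)]=\sigma([x,F])=0$ and hence $\sigma(x)\in Z(E)$, giving $x+R\in W$. Combined with your observation that $F/T\simeq(F/[F,R])/Z(F/[F,R])$ is capable (note $R\subseteq T$ trivially, and $F/[F,R]$ is finite dimensional since $\dim R/[F,R]\le\dim M(L)+\dim F/F^2<\infty$), this yields $Z^*(L)=T/R$ and completes (jj)$\Leftrightarrow$(jjj). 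With that paragraph added, your argument is a complete and self-contained proof of the lemma, which is more than the paper itself supplies.
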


The next corollary is an immediate consequence of the previous lemma.

\begin{cor}[See \cite{sal4}, Corollaries 4.2 and 4.5]\label{c:1} With the notations of Lemma \ref{l:5},
\[ \mathrm{dim} \ M(L/Z)  \le \mathrm{dim} \ M(L) + \mathrm{dim} \ L^2 \cap Z \le \mathrm{dim} \ M(L/Z) + \mathrm{dim} \ Z \cdot \mathrm{dim} \ L/L^2\]
and, if $Z \subseteq Z^*(L)$, then \[ \mathrm{dim} \ M(L) + \mathrm{dim} \ L^2 \cap Z = \mathrm{dim} \ M(L/Z). \]
\end{cor}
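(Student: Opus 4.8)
The plan is to read off everything from the exact sequence of Lemma \ref{l:5}(i), treating it purely as a dimension count over finite-dimensional Lie algebras. Writing that sequence with the leftmost map named $\alpha$,
\[ Z \otimes L/L^2 \overset{\alpha}{\longrightarrow} M(L) \overset{\beta}{\longrightarrow} M(L/Z) \overset{\gamma}{\longrightarrow} L^2 \cap Z \longrightarrow 0, \]
all the information I need is contained in the three exactness conditions: $\ker \beta = \im \alpha$ at $M(L)$, $\im \beta = \ker \gamma$ at $M(L/Z)$, and surjectivity of $\gamma$ at $L^2 \cap Z$.

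First I would establish the left-hand inequality. Since $\gamma$ is surjective with $\ker \gamma = \im \beta$, rank--nullity gives
\[ \mathrm{dim} \ M(L/Z) = \mathrm{dim} \ \im \beta + \mathrm{dim} \ L^2 \cap Z. \]
Bounding $\mathrm{dim} \ \im \beta \le \mathrm{dim} \ M(L)$ then yields $\mathrm{dim} \ M(L/Z) \le \mathrm{dim} \ M(L) + \mathrm{dim} \ L^2 \cap Z$. Next, for the right-hand inequality, I would decompose $\mathrm{dim} \ M(L) = \mathrm{dim} \ \ker \beta + \mathrm{dim} \ \im \beta$. By exactness $\ker \beta = \im \alpha$, so $\mathrm{dim} \ \ker \beta \le \mathrm{dim} \ (Z \otimes L/L^2) = \mathrm{dim} \ Z \cdot \mathrm{dim} \ L/L^2$. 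Substituting $\mathrm{dim} \ \im \beta = \mathrm{dim} \ M(L/Z) - \mathrm{dim} \ L^2 \cap Z$ from the displayed identity and rearranging produces exactly
\[ \mathrm{dim} \ M(L) + \mathrm{dim} \ L^2 \cap Z \le \mathrm{dim} \ M(L/Z) + \mathrm{dim} \ Z \cdot \mathrm{dim} \ L/L^2. \]

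For the equality under $Z \subseteq Z^*(L)$, I would invoke the equivalence of conditions (jj) and (jjj) in Lemma \ref{l:5}: the hypothesis forces $\beta$ to be a monomorphism, hence $\ker \beta = 0$. Then the left-hand inequality is saturated, giving $\mathrm{dim} \ M(L) + \mathrm{dim} \ L^2 \cap Z = \mathrm{dim} \ M(L/Z)$. (Equivalently, one may read this off from condition (j), which asserts $M(L) \simeq M(L/Z)/(L^2 \cap Z)$ and hence the same dimension equality directly.)

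The main obstacle here is essentially bookkeeping rather than a genuine difficulty: the entire statement is a dimension count over a single finite-dimensional exact sequence, and the only nonformal input --- the identification of $\ker \beta = 0$ with the condition $Z \subseteq Z^*(L)$ --- is already supplied by Lemma \ref{l:5}. The care lies in correctly tracking which exactness condition supplies which of the two bounds, and in noting that the equality is precisely the degenerate case of the first inequality when the term $\mathrm{dim} \ \ker \beta$ vanishes.
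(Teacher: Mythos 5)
Your proposal is correct and matches the paper's intent exactly: the paper offers no written proof, declaring the corollary ``an immediate consequence'' of Lemma \ref{l:5}, and your dimension count along the exact sequence of Lemma \ref{l:5}(i), together with the equivalence (jj)$\Leftrightarrow$(jjj) for the equality case, is precisely the bookkeeping being left implicit. Nothing is missing or different in substance.
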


 To convenience of the reader, given an ideal $I$ of a finite dimensional Lie algebra $L$ and a subalgebra $J$ of $L$, we recall that $I$ is said to be a complement of $J$ in $L$ if $L=I+J$ and $I \cap J =0$. The generalization of Corollary \ref{c:1} to the pair $(L,N)$ is illustrated by  Corollary \ref{c:2}, in which the assumption that the  ideal $N$ possesses a complement in $L$ implies (see for instance \cite[p.174]{ris1}) the isomorphism
 \begin{equation}\label{general}M(L) \simeq M(L,N) \oplus M(L/N).\end{equation}

\begin{cor}[See \cite{ris1}, Theorems 2.3 and 2.8]\label{c:2}
Let $N$ be a complement of a finite dimensional Lie algebra $L$ and $K$ be an ideal of $L$. If  $K \subseteq N \cap Z(L)$,  then
\[\mathrm{dim} \ M(L,N) + \mathrm{dim} \  K \cap [L,N]  \]
\[\le \mathrm{dim} \ M(L/K, N/K) + \mathrm{dim} \ M(K) + \mathrm{dim} \ \frac{L}{L^2+K} \cdot \mathrm{dim} \ K. \] Moreover, if $K \subseteq N \cap Z^*(L)$, then
\[\mathrm{dim} \ M(L,N) + \mathrm{dim} \  K \cap [L,N] = \mathrm{dim} \ M(L/K,N/K).\]
\end{cor}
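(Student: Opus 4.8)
The plan is to reproduce, in the relative setting of the pair $(L,N)$, the mechanism behind Corollary \ref{c:1}, using the complement of $N$ to shuttle between $M(L,N)$ and the absolute multiplier $M(L)$. Since $N$ admits a complement $H$ in $L$ (so $L=N+H$, $N\cap H=0$, with $N$ an ideal), \eqref{general} gives $M(L)\simeq M(L,N)\oplus M(L/N)$. The image $(H+K)/K$ is a complement of the ideal $N/K$ in $L/K$ and $(L/K)/(N/K)\simeq L/N$, so \eqref{general} applied to $L/K$ gives $M(L/K)\simeq M(L/K,N/K)\oplus M(L/N)$. Subtracting cancels the common summand and yields
\begin{equation}\label{eq:red}
\mathrm{dim}\ M(L,N)-\mathrm{dim}\ M(L/K,N/K)=\mathrm{dim}\ M(L)-\mathrm{dim}\ M(L/K).
\end{equation}
I would also record the identity $K\cap[L,N]=K\cap L^2$: from $L^2=[L,N]+H^2$ with $[L,N]\subseteq N$, $H^2\subseteq H$ and $N\cap H=0$ one reads off $L^2\cap N=[L,N]$, and since $K\subseteq N$ this gives $K\cap L^2=K\cap[L,N]$.

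With \eqref{eq:red} the equality clause is immediate. If $K\subseteq N\cap Z^*(L)$ then $K\subseteq Z^*(L)$, so the equality part of Corollary \ref{c:1} (with $Z=K$) gives $\mathrm{dim}\ M(L)+\mathrm{dim}\ L^2\cap K=\mathrm{dim}\ M(L/K)$, i.e. $\mathrm{dim}\ M(L)-\mathrm{dim}\ M(L/K)=-\mathrm{dim}\ K\cap[L,N]$. Substituting into \eqref{eq:red} gives $\mathrm{dim}\ M(L,N)+\mathrm{dim}\ K\cap[L,N]=\mathrm{dim}\ M(L/K,N/K)$, as asserted.

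For the inequality, the bound furnished by Corollary \ref{c:1} alone is too weak: it replaces the kernel of $M(L)\to M(L/K)$ by all of $K\otimes L/L^2$ and so produces $\mathrm{dim}\ K\cdot\mathrm{dim}\ L/L^2$ in place of the sharper $\mathrm{dim}\ M(K)+\mathrm{dim}\ \frac{L}{L^2+K}\cdot\mathrm{dim}\ K$. I would instead invoke the sharpened Ganea--Stallings sequence of \cite{sal4, ris1}, the refinement of Lemma \ref{l:5}(i) whose source term is cut down to $M(K)\oplus(K\otimes\frac{L}{L^2+K})$:
\[ M(K)\oplus\Bigl(K\otimes\tfrac{L}{L^2+K}\Bigr)\overset{\alpha}\longrightarrow M(L)\overset{\beta}\longrightarrow M(L/K)\overset{\gamma}\longrightarrow L^2\cap K\longrightarrow 0. \]
Exactness gives $\mathrm{dim}\ M(L)+\mathrm{dim}\ L^2\cap K=\mathrm{dim}\ M(L/K)+\mathrm{dim}\ \im\alpha$ together with $\mathrm{dim}\ \im\alpha\le\mathrm{dim}\ M(K)+\mathrm{dim}\ \frac{L}{L^2+K}\cdot\mathrm{dim}\ K$; inserting this and $K\cap L^2=K\cap[L,N]$ into \eqref{eq:red} yields precisely the stated inequality.

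The main obstacle is the refinement of the source term, i.e. passing from $K\otimes L/L^2$ down to $M(K)\oplus(K\otimes\frac{L}{L^2+K})$. The idea is to split $L/L^2=\frac{L}{L^2+K}\oplus\frac{K+L^2}{L^2}$ with $\frac{K+L^2}{L^2}\simeq K/(K\cap L^2)$, and to check that the connecting map restricted to $K\otimes\frac{K+L^2}{L^2}$ factors through the antisymmetrization $K\otimes K\twoheadrightarrow\wedge^2K\simeq M(K)$, the symmetric tensors being killed because $K$ is central. Securing this factorization --- equivalently, the relative form of \cite{ris1}, Theorems 2.3 and 2.8, stated above --- is the crux; the remainder is bookkeeping with \eqref{eq:red} and Corollary \ref{c:1}.
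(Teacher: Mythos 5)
Your proposal is correct, but note that the paper itself contains no proof to compare against: Corollary \ref{c:2} is quoted from \cite{ris1} (Theorems 2.3 and 2.8), the text only remarking that the proof there rests on the exactness of \eqref{pairs} when $N$ admits a complement, i.e., on the relative sequences \eqref{schurpairs} and \eqref{natural} run directly for the pair $(L,N)$. Your route is organized genuinely differently: you apply \eqref{general} twice --- to $(L,N)$, and, after checking that $(H+K)/K$ complements $N/K$ (centrality of $K$ is what makes $H+K$ a subalgebra), to $(L/K,N/K)$ --- cancel the common summand $M(L/N)$, and thereby reduce the relative statement to the absolute comparison $\mathrm{dim}\ M(L,N)-\mathrm{dim}\ M(L/K,N/K)=\mathrm{dim}\ M(L)-\mathrm{dim}\ M(L/K)$; combined with your correctly verified identity $K\cap[L,N]=K\cap L^2$ (from $L^2=[L,N]+H^2$ and $N\cap H=0$), the equality clause is then exactly Corollary \ref{c:1} with $Z=K\subseteq Z^*(L)\subseteq Z(L)$. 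Your diagnosis for the inequality is also right: Corollary \ref{c:1} alone is too coarse (when $K\cap L^2=0$ it overshoots the stated bound by $\frac{1}{2}\,\mathrm{dim}\ K\,(\mathrm{dim}\ K+1)$), so the refined source term $M(K)\oplus\bigl(K\otimes\frac{L}{L^2+K}\bigr)$ is the one genuinely new ingredient, and your sketch of it is sound over a field: split $L/L^2=\frac{L^2+K}{L^2}\oplus W$ as vector spaces, and observe that the Ganea map of Lemma \ref{l:5}(i), computed by bracketing lifts in a free presentation (well-defined precisely because $K$ is central), kills $x\otimes\bar{x}$ on $K\otimes\frac{L^2+K}{L^2}$ and hence factors through a quotient of $\wedge^{2}K\simeq M(K)$, $K$ being abelian. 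There is no circularity, since this refinement is an absolute statement about $\beta\colon M(L)\rightarrow M(L/K)$ rather than the pair statement being proved; your phrase ``equivalently, the relative form of \cite{ris1}'' slightly overstates --- it is the absolute input which, together with \eqref{general}, \emph{implies} the relative theorems of \cite{ris1}. That is what each approach buys: yours concentrates all relative content in the splitting \eqref{general} and derives the corollary from absolute facts, while the cited route stays inside the pair machinery of \eqref{schurpairs} and \eqref{natural}, which is what the present paper goes on to generalize in the proof of Theorem \ref{t:5}.
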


The proof of Corollary \ref{c:2} is based on the exactness of the sequence \eqref{pairs}, when $N$ is a complement of a finite dimesional Lie algebra $L$ and we are going to generalize this strategy in our last result.

\begin{proof}[Proof of Theorem \ref{t:5}]
Since $I,J$ are ideals of $L$ and $J$ is a complement of $I$ in $L$, the exact sequence \eqref{triples}  induces the short exact sequence
\[\ 0  \rightarrow M(L,I,J) \rightarrowtail  M(L,J)  \twoheadrightarrow M(L/I,L/I) \rightarrow 0,\]
that is,  $M(L,J)$ splits over $M(L,I,J)$ by $M(L/I, L/I)=M(L/I)=M(J)$. Then
\[\mathrm{dim} \ M(L,J) = \mathrm{dim} \ M(L,I,J) + \mathrm{dim} \ M(J).\]
We may apply Corollary \ref{c:2} to the term $\mathrm{dim} \ M(L,J)$, getting
\[\mathrm{dim} \ M(L,I,J) = \mathrm{dim} \ M(L,J) - \mathrm{dim} \ M(J)\]
\[\le \Big(\mathrm{dim} \  M (L/K,J/K) \ + \ \mathrm{dim} \  M(K) \ + \ \mathrm{dim} \  \frac{L}{L^2+K} \otimes K - \mathrm{dim} \ K \cap [L,J]
\Big) - \mathrm{dim} \ M(J)\]
\[= \mathrm{dim} \  M (L/K,J/K) \ - \mathrm{dim} \ M(J) \ + \ \mathrm{dim} \  M(K) \]\[ + \ \mathrm{dim} \  \frac{L}{L^2+K} \otimes K- \mathrm{dim} \ K \cap [L,J],\]
as claimed.
\end{proof}

A special case is the following.

\begin{cor} With the notations of Theorem \ref{t:5}, if $K \subseteq J \cap Z^*(L)$, then
\[\mathrm{dim} \ M(L,I,J)=  \mathrm{dim} \  M (L/K,J/K) - \mathrm{dim} \ K \cap [L,J]- \mathrm{dim} \ M(J).\]
\end{cor}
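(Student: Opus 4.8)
The plan is to reuse the two ingredients already assembled for Theorem \ref{t:5} and simply replace the inequality coming from Corollary \ref{c:2} by its sharper equality counterpart. First I would record the identity established in the course of proving Theorem \ref{t:5}: since $J$ is a complement of $I$ in $L$, the exact sequence \eqref{triples} collapses to the short exact sequence
\[0 \rightarrow M(L,I,J) \rightarrowtail M(L,J) \twoheadrightarrow M(L/I,L/I) \rightarrow 0,\]
and, using $M(L/I,L/I)=M(L/I)=M(J)$, this gives
\[\mathrm{dim} \ M(L,I,J) = \mathrm{dim} \ M(L,J) - \mathrm{dim} \ M(J).\]
This identity is purely structural and uses only the complementation $L=I+J$, $I \cap J=0$; it does not involve $K$.

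Next I would bring in the hypothesis $K \subseteq J \cap Z^*(L)$. The key point is to check that this places us in the \emph{equality} clause of Corollary \ref{c:2} rather than its inequality clause. Indeed, the epicenter $Z^*(L)$ is a central subalgebra of $L$, so $Z^*(L) \subseteq Z(L)$ and hence $K \subseteq J \cap Z^*(L) \subseteq J \cap Z(L)$; taking $N=J$, which is complemented in $L$ by $I$, the ``moreover'' part of Corollary \ref{c:2} yields
\[\mathrm{dim} \ M(L,J) + \mathrm{dim} \ K \cap [L,J] = \mathrm{dim} \ M(L/K, J/K).\]

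Finally I would eliminate $\mathrm{dim} \ M(L,J)$ between the two displayed equalities. Solving the second for $\mathrm{dim} \ M(L,J)$ and substituting into the first gives
\[\mathrm{dim} \ M(L,I,J) = \mathrm{dim} \ M(L/K, J/K) - \mathrm{dim} \ K \cap [L,J] - \mathrm{dim} \ M(J),\]
which is the asserted formula. There is no genuine obstacle here: all the real work has been carried out in Theorem \ref{t:5} and Corollary \ref{c:2}, and the corollary is a clean specialization. The only step deserving attention is the verification that $K$ satisfies the stronger hypothesis $K \subseteq N \cap Z^*(L)$ of the equality clause, which is immediate once one observes $Z^*(L) \subseteq Z(L)$; had we only assumed the weaker condition $K \subseteq J \cap Z(L)$, the inequality clause of Corollary \ref{c:2} would reproduce the bound of Theorem \ref{t:5} rather than the present exact equality.
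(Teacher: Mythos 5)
Your proposal is correct and follows exactly the route the paper intends: the corollary is stated as an immediate special case of Theorem \ref{t:5}, obtained by combining the identity $\mathrm{dim}\ M(L,I,J)=\mathrm{dim}\ M(L,J)-\mathrm{dim}\ M(J)$ with the equality clause (``moreover'' part) of Corollary \ref{c:2} applied to $N=J$, which is precisely your substitution argument. Your extra remark that $K\subseteq J\cap Z^*(L)\subseteq J\cap Z(L)$ (since the epicenter is by definition a central subalgebra) is a worthwhile sanity check, but the argument is otherwise identical to the paper's implicit one.
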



\begin{thebibliography}{99}

\bibitem{sal4} V. Alamian, H. Mohammadzadeh and A.R. Salemkar, Some properties of the Schur multiplier and covers of Lie algebras, \textit{Comm. Algebra} \textbf{36} (2008), 697--707.

%\bibitem{sal5}V. Alamian, H. Bigdely and A.R. Salemkar, Some properties on isoclinism of Lie algebras and covers, \textit{J. Algebra Appl.} \textbf{7} (2008),  507--516.

\bibitem{anc1} J.M. Ancochea--Bermudez and O.R. Campoamor--Stursberg, On Lie algebras whose nilradical is $(n-p)$--filiform, \textit{Comm. Algebra} \textbf{29} (2001), 427--450.

%\bibitem{anc2}J.M. Ancochea Bermudez and M. Goze,On the classification of rigid Lie algebras, \textit{J. Algebra} \textbf{245} (2001),  68--91.

\bibitem{sal1} M. Araskhan, B. Edalatzadeh and A.R. Salemkar
Some inequalities for the dimension of the $c$-nilpotent multiplier of Lie algebras,
 \textit{J. Algebra} \textbf{322} (2009), 1575--1585.

\bibitem{ris1}M. Araskhan and M.R. Rismanchian, Some inequalities for the dimension of the Schur multiplier of a pair of (nilpotent) Lie algebras, \textit{J. Algebra}  \textbf{352} (2012), 173--179.

\bibitem{ris2}M. Araskhan and M.R. Rismanchian, Some properties on the Schur multiplier of a pair of Lie algebras, \textit{J. Algebra Appl.} (2012), to appear.

\bibitem{es1}  P. Batten, K. Moneyhun and E. Stitzinger,  On characterizing nilpotent Lie algebras by their multipliers, \textit{Comm. Algebra} \textbf{24} (1996), 4319--4330.

\bibitem{bosko1}L. Bosko,  On Schur multipliers of Lie algebras and groups of maximal class, \textit{Int. J. Algebra Comput.} \textbf{20}  (2010), 807--821.

\bibitem{bosko2}L. Bosko and E. Stitzinger, \textit{Schur multipliers of nilpotent Lie algebras},  preprint, 2011, available online at http://arxiv.org/abs/1103.1812.

\bibitem{chao} C.Y. Chao, Uncountably many  non--isomorphic nilpotent Lie algebras,\textit{Proc. Amer. Math. Soc.} \textbf{13} (1962), 903--906.

\bibitem{sal2} B. Edalatzadeh, F. Saeedi and A.R. Salemkar, The commutator subalgebra and Schur multiplier of a pair of nilpotent Lie algebras, \textit{J. Lie Theory} \textbf{21} (2011), 491--498.

\bibitem{sal3}B. Edalatzadeh, H. Mohammadzadeh, A.R. Salemkar and  M. Tavallaee, On the non-abelian tensor product of Lie algebras, \textit{Linear Multilinear Algebra} \textbf{58} (2010),  333--341.

\bibitem{sal6}B. Edalatzadeh, H. Mohammadzadeh and A.R. Salemkar, On covers of perfect Lie algebras, \textit{Algebra Colloq.} \textbf{18} (2011), 419--427,

%\bibitem{sal5}F. Mirzaei and A.R. Salemkar, Characterizing $n$n-isoclinism classes of Lie algebras. \textit{Comm. Algebra} \textbf{38} (2010), 3392--3403.

\bibitem{ellis} G. Ellis, The Schur multiplier of a pair of groups,  \textit{Appl. Categ. Structures} \textbf{6} (1998), 355--371.


\bibitem{anc3}M. Goze and Yu. Khakimyanov, \textit{Nilpotent Lie Algebras}, Kluwer, Dordrecht, 1996.


\bibitem{es2} P. Hardy and E. Stitzinger,  On characterizing nilpotent Lie algebras by their multipliers $t(L) = 3, 4, 5, 6$, \textit{Comm. Algebra} \textbf{26} (1998), 3527--3539.

\bibitem{karp} G. Karpilovsky, \textit{The Schur Multiplier}, Clarendon Press, Oxford, 1987.

\bibitem{marshall} E.I. Marshall, The Frattini subalgebra of a Lie algebra, \textit{J. London. Math. Soc.} \textbf{42} (1967), 416--422.

\bibitem{n1} P. Niroomand and F.G. Russo, A note on the Schur multiplier of a nilpotent Lie algebra, \textit{Comm. Algebra} \textbf{39} (2011), 1293--1297.

\bibitem{n2} P. Niroomand and F.G. Russo, A restrictions on the Schur multiplier of a nilpotent Lie algebra, \textit{Elec. J. Linear Algebra} \textbf{22} (2011), 1--9.

\bibitem{n3} P. Niroomand,  On dimension of the Schur multiplier of nilpotent Lie Algebras, \textit{Cent. Eur. J. Math.}  \textbf{9} (2011), 57--64.

\bibitem{n4} P. Niroomand, On the tensor square of nonabelian nilpotent finite dimensional Lie algebras. \textit{Linear Multilinear Algebra} \textbf{59} (2011), 831--836.

\bibitem{n5} P. Niroomand and F.G. Russo, An improvement of a bound of Green, \textit{J. Algebra Appl.} (2012), to appear.

\bibitem{n6} P. Niroomand, Some properties on the tensor square of Lie algebras,  \textit{J. Algebra Appl.} (2012), to appear.

\bibitem{n7} P. Niroomand and M. Parvizi, \textit{Capability of nilpotent Lie algebras with small derived subalgebras}, preprint, available online at http://arxiv.org/abs/1002.4280.

\bibitem{rot} J. Rotman, \textit{An Introduction to Homological Algebra}, Academic Press, San Diego, 1979.

%\bibitem{fgr} F.G. Russo, \textit{On the corank of finite dimensional nilpotent Lie algebras}, preprint, 2011, submitted.

%\bibitem{schur} I. Schur, \"Uber die Darstellung der endlichen Gruppen durch gebrochene lineare Substitutionen,\textit{J. Reine Angew. Math.} \textbf{127} (1904), 20--50.

\bibitem{see} C. Seeley, Some nilpotent Lie algebras of even dimension, \textit{Bull. Austral. Math. Soc.} \textbf{45} (1992), 71--77.

\bibitem{yank}B. Yankosky,  On the multiplier of a Lie algebra, \textit{J. Lie Theory} \textbf{13} (2003), 1--6.

\end{thebibliography}
\end{document}